\documentclass[11pt,a4paper,reqno]{amsart}
\usepackage[
  a4paper,
  left=2.2cm,
  right=2.2cm,
  top=2.0cm,
  bottom=2.2cm
]{geometry}

\usepackage{arydshln}
\usepackage{mathrsfs}
\usepackage{amsmath}
\usepackage{amsfonts}
\usepackage{amssymb}
\usepackage[all,cmtip]{xy}
\usepackage[colorlinks]{hyperref}
\usepackage{color}
\usepackage{cleveref}
\usepackage{ulem}
\usepackage[abs]{overpic}
\usepackage{float}
\usepackage{enumitem}
\usepackage{graphicx}

\usepackage{amsthm}

\usepackage{etoolbox}

\def\<{\langle}
\def\>{\rangle}

\def\-{\overline}

\def\-{\overline}

\def\-{\overline}

\newtheorem{theorem}{Theorem}[section]
\newtheorem{lemma}[theorem]{Lemma}

\newtheorem{proposition}[theorem]{Proposition}

\newcommand\blfootnote[1]{%
  \begingroup
  \renewcommand\thefootnote{}\footnote{#1}%
  \addtocounter{footnote}{-1}%
  \endgroup
}

\title[Classification of homogeneous hypersurfaces in $\mathbb C^n$]
{\bf Classification of compact homogeneous strongly pseudoconvex hypersurfaces in $\mathbb C^n$}

\author{Hanlong Fang}
\address{School of Mathematical Sciences, Peking University, Beijing, 100871, China}
\email{hlfang$@$pku.edu.cn}

\subjclass[2010]{32C09, 32V40}
\keywords{Global embeddability of CR-manifolds in complex space, Homogeneous hypersurfaces}
\date{}

\begin{document}

\blfootnote{H. Fang is supported by National Key R\&D Program of China under Grant No.2022YFA1006700. }

\begin{abstract} 
By constructing explicit entire maps, we prove that every exceptional Morimoto--Nagano model $M_t^3$, $t>1$, can be realized as a compact real-analytic hypersurface in $\mathbb C^3$. This extends Isaev's realization results from restricted parameter ranges to all $t>1$ and provides a complete solution to a long-standing open problem in the Morimoto--Nagano classification of compact, simply connected, real-analytic hypersurfaces in complex Euclidean spaces that are homogeneous under the action of a connected Lie group of CR automorphisms.
\end{abstract}

\maketitle

\section{Introduction} 

This paper addresses the remaining CR-embeddability problem arising from the Morimoto--Nagano classification of compact, simply connected, real-analytic hypersurfaces in 
$\mathbb C^n$ that are homogeneous under the action of a connected Lie group of CR automorphisms.

Cartan's foundational work~\cite{Cartan1933,Cartan1932} classified, up to local biholomorphic equivalence, the locally homogeneous real-analytic Levi-nondegenerate hypersurfaces in $\mathbb C^2$. Morimoto and Nagano~\cite{MorimotoNagano1963} later studied the corresponding global classification problem for compact homogeneous hypersurfaces in $\mathbb C^n$. Outside the exceptional dimensions $3$ and $7$, their main theorem takes the following form.

\begin{theorem}[Morimoto--Nagano, 1963]
Let $S\subset\mathbb C^n$, $n\neq3,7$, be a compact, simply connected, real-analytic hypersurface.
If a connected Lie group acts transitively on \(S\) by CR automorphisms, then \(S\) is CR-equivalent to the unit sphere
\(S^{2n-1}\subset\mathbb{C}^n\).
\end{theorem}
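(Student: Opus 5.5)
The plan is to reduce the statement to a classification of compact homogeneous CR manifolds and then show that, outside $n=3,7$, only the sphere can embed. First, $S$ bounds a relatively compact domain $D\subset\mathbb C^n$. By Hartogs extension, every CR automorphism of $S$ extends holomorphically to a neighbourhood of $\overline D$. Compactness of $S$ also gives a point of strict convexity, so $S$ is strongly pseudoconvex somewhere. Homogeneity then propagates the nondegenerate Levi form to all of $S$, so $S$ is strongly pseudoconvex everywhere. The group $G$ acting transitively can be replaced by the closure of its image in $\mathrm{Aut}_{CR}(S)$. That automorphism group is a Lie group, and it is compact unless $S$ is spherical, by the Wong/Rosay/Schoen rigidity theorem. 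In the spherical case, $S$ is a compact simply connected spherical CR manifold, hence CR-equivalent to $S^{2n-1}$. So from now on I assume a compact connected Lie group $K$ acts transitively on $S$.

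Next, write $S=K/H$ with $K$ compact and $S$ simply connected of dimension $2n-1$. I would use the structure theory of compact homogeneous CR manifolds of codimension one (Morimoto--Nagano, Azad--Huckleberry--Richthofer). $K$ has an orbit of dimension one less, and the strongly pseudoconvex condition forces $S$ to fiber as a circle bundle over a compact homogeneous Kähler (flag) manifold $K/L$ with $\dim_{\mathbb C}=n-1$. Alternatively $K$ acts with cohomogeneity one on $D$ and $S$ is a tube-type orbit. The embedding $S\subset\mathbb C^n$ imposes strong topological constraints. $S$ bounds a Stein domain $D$, so $D$ has the homotopy type of a CW complex of dimension $\le n$. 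By Lefschetz duality, $H^k(S)$ vanishes for $n\le k\le 2n-2$ in a suitable range, and $S$ must resemble a homology sphere or a sphere bundle. Combining this with the classification of transitive compact group actions on such manifolds, the candidates are homogeneous spaces diffeomorphic to spheres $S^{2n-1}$ (Montgomery--Samelson--Borel list) or to sphere bundles such as Stiefel-type manifolds $SO(n+1)/SO(n-1)$, the unit tangent sphere bundle of $S^n$. The sphere bundle case corresponds to tubes over spheres, $\{\sum z_j^2=1\}$-type hypersurfaces.

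Then I would run the case analysis on the transitive actions. For $K=U(n)$, $SU(n)$ or $Sp$-type groups acting on $S^{2n-1}$, the invariant CR structures are essentially unique, and these give the standard sphere. The remaining transitive actions on odd spheres are $G_2$ on $S^6$ (not odd, so irrelevant), $Spin(7)$ on $S^7$, and $Spin(9)$ on $S^{15}$. The case $Spin(7)$ on $S^7$ gives $n=4$ CR structures that are all standard. The case $SO(n+1)$ on the unit tangent bundle of $S^n$, i.e. $SO(n+1)/SO(n-1)$ of dimension $2n-1$, is the delicate one. The CR structures are induced from quadrics. I would show that an embedding into $\mathbb C^n$, in the appropriate homogeneous form, is possible only when the isotropy representation admits extra invariant structures. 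This occurs exactly for $n=3$ (where $SO(4)/SO(2)$ has extra freedom, giving the $M^3_t$ family) and $n=7$ (via octonions/$G_2$). In every other dimension the $SO(n+1)$-invariant strongly pseudoconvex structure is forced to be the one whose simply connected compact quotient fails to embed or is not simply connected. Equivalently, only the sphere survives.

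The main obstacle is this last step: ruling out non-spherical $K$-invariant CR structures on the sphere-bundle candidates for $n\ne3,7$ and proving they cannot sit in $\mathbb C^n$. My first attempt would use the Cartan--Chern--Moser invariants. I would compute the isotropy representation of $H$ on $T_pS$ and show that for $n\neq3,7$ it is irreducible on the contact distribution with no invariant nondegenerate 2-forms beyond a unique one. That would force the Chern--Moser curvature, an $H$-invariant tensor, to vanish, making $S$ spherical.
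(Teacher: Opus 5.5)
The paper does not prove this statement; it quotes it from Morimoto--Nagano. So I am judging your proposal on its own terms. Your opening reduction is fine: strong pseudoconvexity everywhere, and Wong--Rosay/Schoen to obtain a compact transitive group. There is, however, a genuine gap exactly at the step you flag as the main obstacle.

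\textbf{The proposed final step is false.} You want to show, via the isotropy representation and the Chern--Moser tensor, that for $n\neq 3,7$ every $SO(n+1)$-invariant strongly pseudoconvex CR structure on $SO(n+1)/SO(n-1)$ is spherical. This fails in every dimension. The models $M_t^n\subset Q^n\subset\mathbb C^{n+1}$ are $SO(n+1)$-homogeneous, strongly pseudoconvex and non-spherical for all $t>1$, as the introduction recalls.
\begin{itemize}
\item The contact plane is not irreducible. As an $SO(n-1)$-module it is $\mathbb R^{n-1}\otimes\mathbb R^2$, so $1\otimes J_0$ is invariant for every complex structure $J_0$ on the $\mathbb R^2$ factor.
\item At $p=(\sqrt{(t+1)/2},\,i\sqrt{(t-1)/2},0,\dots,0)\in M_t^n$, the induced $J_0$ depends on $t$ through $\sqrt{(t+1)/(t-1)}$. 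This is precisely the modulus your argument would have to kill.
\item More fundamentally, $M_t^n$ embeds in $\mathbb C^{n+1}$ for every $n$. Hence no argument using only the intrinsic (let alone local) CR geometry of $S$ can single out $n=3,7$; the obstruction must use the target $\mathbb C^n$ globally.
\item There is also no ``homogeneous form'' of the embedding to exploit. Every complex $3$-dimensional representation of $SO(4)$ has nontrivial kernel, so no embedding $M_t^3\hookrightarrow\mathbb C^3$ is linearly $SO(4)$-equivariant. The dimensions $3$ and $7$ are not distinguished by extra isotropy invariants or octonionic structure.
\end{itemize}

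\textbf{The missing idea is topological}, and the paper's introduction points to it. Once $S\cong M_t^n$, the CR equivalence extends (Hartogs--Bochner; cf.\ Isaev's extension theorem) to a biholomorphism of $D_t^n$ onto the domain bounded by $S$. Restricting to the zero section gives a totally real embedding $S^n\hookrightarrow\mathbb C^n$ with normal bundle $J(TS^n)\cong TS^n$.
\begin{itemize}
\item For $n$ even, this contradicts the vanishing of the self-intersection number of $S^n$ in $\mathbb C^n$, since $\chi(S^n)=2$.
\item For $n\ge 3$ odd, the complement of $S^n$ in $S^{2n}$ is a homotopy $(n-1)$-sphere, by Alexander duality plus simple connectivity. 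This makes the normal sphere bundle, here the unit tangent bundle of $S^n$, fibre-homotopically trivial. That forces $[\iota_n,\iota_n]=0$, hence $n\in\{3,7\}$ by Adams.
\end{itemize}
So $3$ and $7$ survive because $S^3$ and $S^7$ are parallelizable. This is also why $n=7$ later needed the finer obstruction of Gromov and Stout--Zame.

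\textbf{Secondary issues.}
\begin{itemize}
\item The reduction to the dichotomy ``sphere or $M_t^n$'' is the real content of Morimoto--Nagano, and you only gesture at it.
\item One supporting claim is wrong: $H^k(S)$ need not vanish for $n\le k\le 2n-2$. For odd $n\ge 3$ one has $H^n(M_t^n)\cong\mathbb Z$. For example, $M_t^3\cong S^3\times S^2$, which the paper's main theorem embeds in $\mathbb C^3$.
\item $Spin(7)$ acting on $S^7$ has isotropy $G_2$, which acts irreducibly on $\mathbb R^7$. So it preserves no hypersurface-type CR structure at all, rather than ``only standard ones'' as you state.
\end{itemize}
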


In the exceptional dimensions 
$n=3,7$, the work of Morimoto and Nagano left open the possibility of exceptional models, namely the following one-parameter families of nonspherical hypersurfaces.  Denote the quadratic hypersurface in $\mathbb C^{n+1}$ by
\begin{equation}\label{eq:general-quadric}
Q^n:=\left\{z=(z_1,\ldots,z_{n+1})\in\mathbb C^{n+1}:          z_1^2+\cdots+z_{n+1}^2=1\right\},
\end{equation}
and, for $t>1$, set
\begin{equation}\label{eq:general-levels}
\begin{split}
&D_t^n:=\left\{z\in \mathbb C^{n+1}:|z_1|^2+\cdots+|z_{n+1}|^2<t\right\}\cap Q^n,\\
&M_t^n:=\partial D_t^n=\left\{z\in \mathbb C^{n+1}:|z_1|^2+\cdots+|z_{n+1}|^2=t\right\}\cap Q^n.   
\end{split}
\end{equation} 
Clearly, $M_t^n$ is diffeomorphic to a tangent sphere bundle over $S^n$, and hence admits a transitive action of $SO(n+1,\mathbb R)$. The hypersurfaces $M_t^n$
are known to be nonspherical~\cite{nemishafi}, and pairwise CR-nonequivalent~\cite{KaupZaitsev,BurnsHind}. The work of Morimoto and Nagano left open the question of whether \(M_t^3\) and \(M_t^7\) admit real-analytic CR embeddings into \(\mathbb C^3\) and \(\mathbb C^7\), respectively.

Isaev~\cite{Isaev2013} proved that every real-analytic CR embedding
$M_t^n\hookrightarrow\mathbb C^n$ extends to a biholomorphism from $D_t^n$ onto a domain in $\mathbb C^n$. Since the zero section
$S^n=Q^n\cap\mathbb R^{n+1}\subset D_t^n$ is totally real, the
restriction of the extended biholomorphism to $S^n$ is a  totally real embedding into $\mathbb C^n$. Using Smale's regular-homotopy theorem \cite{Smale} together with a homotopy-theoretic obstruction to totally real immersions, Gromov~\cite{Gromov1986} and Stout and Zame~\cite{StoutZame1986} proved that the seven-sphere $S^7$ admits no totally real embedding into $\mathbb C^7$. Isaev thus proved that no $M_t^7$, $t>1$, admits a real-analytic CR embedding into $\mathbb C^7$. 

In complex dimension three, Gromov~\cite{Gromov1973,Gromov1986} proved the existence of a real-analytic totally real embedding $S^3\hookrightarrow\mathbb C^3$ as an application of his $h$-principle. Ahern and Rudin~\cite{AhernRudin1985} constructed an explicit
polynomial totally real embedding. Forstneri\v{c}~\cite{Forstneric1986,Forstneric1990} later developed further constructions and generalizations. The holomorphic complexification of the Ahern--Rudin map is biholomorphic on a neighborhood of $S^3$ in $Q^3$. Since $D_t^3$ shrinks to $S^3$ as $t\rightarrow1$, this
gives embeddings for $t>1$ sufficiently close to $1$.

In 2013, Isaev \cite{Isaev2013} proved that the Ahern--Rudin map embeds $M_t^3$ for $1<t<1+10^{-6}$, and that the differential of the Ahern--Rudin map drops rank at some point of $M_t^3$ exactly when $t\geq\sqrt{(2+\sqrt2)/3}$. Isaev~\cite{Isaev2017} subsequently proved embeddability throughout $1<t<\sqrt{(2+\sqrt2)/3}$ for the Ahern--Rudin map. Using a different polynomial map, Isaev~\cite{Isaev2021} later enlarged the known range to $1<t<\frac{\sqrt5}{2}$.

In this paper, we combine the differential-equation method of Bell and Narasimhan for constructing holomorphic immersions~\cite{Bell}, Isaev's coordinate ansatz~\cite{Isaev2021}, and a natural one-parameter automorphism of \(Q^3\) to construct explicit holomorphic immersions \(Q^3\to\mathbb C^3\) that restrict to embeddings of \(M_t^3\) for every \(t>1\). Our main result is the following theorem.

\begin{theorem}\label{thm:main}
For every \(t>1\), there exist an open neighborhood \(V_t\) of
\(\overline{D_t^3}\) in \(Q^3\) and a holomorphic embedding
\[
\Phi_t:V_t\hookrightarrow\mathbb C^3
\]
that is the restriction of an explicitly given entire map
\(\mathbb C^4\to\mathbb C^3\). In particular,
\(\Phi_t|_{M_t^3}\) is a real-analytic CR embedding.
\end{theorem}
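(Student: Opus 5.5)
Our plan follows the three ingredients named in the introduction: first build a holomorphic immersion $F:Q^3\to\mathbb C^3$ coming from an entire map on $\mathbb C^4$; then precompose it with a one-parameter automorphism of $Q^3$ chosen so that the composite is injective on $\overline{D_t^3}$. By Isaev's extension result~\cite{Isaev2013} it is enough to produce an injective immersion defined on a neighborhood of $\overline{D_t^3}$. Compactness of $\overline{D_t^3}$ then gives a neighborhood $V_t$ on which the map is a holomorphic embedding, and the CR statement follows by restricting to $M_t^3$.

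\emph{Step 1: a global immersion.} We use the method of Bell and Narasimhan~\cite{Bell}. The tangent bundle of $Q^3$ is holomorphically trivial; for example, identify $Q^3\cong SL_2(\mathbb C)$ and use left-invariant fields, or write the fields explicitly via the quaternionic structure on $\mathbb C^4$. We then look for $F=(F_1,F_2,F_3)$ whose differential sends a global frame $X_1,X_2,X_3$ to a prescribed invertible matrix $A(z)$ of entire functions. In Isaev's coordinate ansatz~\cite{Isaev2021} one takes $F_j$ to be $z_j$ (or a real linear combination of coordinates) plus a correction term $h_j(z)\,\overline{\ }$-free polynomial or exponential factors in $z_4$. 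The requirement that $dF$ be nonsingular then reduces to an ODE in one variable along the orbits of a one-parameter subgroup. Solving this ODE explicitly gives entire functions, for instance of the form $e^{\lambda z_4}$-type terms, for which $\det dF|_{TQ^3}$ is nowhere zero on all of $Q^3$. We will verify this by computing the determinant in the trivializing frame and showing that it equals a nonvanishing exponential times a constant.

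\emph{Step 2: injectivity via an automorphism.} Let $\psi_s$ be the one-parameter group of holomorphic automorphisms of $Q^3$ induced by a complex hyperbolic rotation, e.g. $(z_1,z_2)\mapsto(\cosh s\,z_1+i\sinh s\,z_2,\ -i\sinh s\,z_1+\cosh s\,z_2)$, or a real-scaling flow of the same form. We choose $\psi_s$ so that it maps $\overline{D_t^3}$ into the region where $F$ is already known to be injective, namely a tube around $S^3$ stretched in the direction in which $F$ separates points. We set $\Phi_t=F\circ\psi_{s(t)}$, which is still the restriction of an entire map $\mathbb C^4\to\mathbb C^3$ because $\psi_s$ is linear. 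To prove injectivity, suppose $F(p)=F(q)$ with $p,q$ in the image set. The ansatz makes some components real-affine in $z$ up to the correction term. Comparing real and imaginary parts should force $\operatorname{Re}$-parts to coincide, which would reduce the problem to the injectivity of a one-variable function such as $w\mapsto w+ce^{w}$ on a strip. That injectivity can be checked by a monotonicity or argument estimate.

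\emph{Main obstacle.} The hard part will be Step 2: making the injectivity estimate uniform in $t$ while choosing $s(t)$. What we will try first is an explicit norm estimate. We would show that $|F(p)-F(q)|\ge c\,|p-q|$ on the transformed domain by writing $F(p)-F(q)=\int_0^1 dF(\gamma(\tau))\,\gamma'(\tau)\,d\tau$ along a path $\gamma$ in $Q^3$, and then proving that the numerical range of $dF$, expressed in the trivializing frame, stays in a half-plane on the relevant region. The parameter $s(t)$ would be chosen precisely so that this half-plane condition holds on $\psi_{s(t)}(\overline{D_t^3})$ with a small margin, and that margin then extends to a neighborhood $V_t$.
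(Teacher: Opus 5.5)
\textbf{There is a genuine gap in Step 2.} Your outline has the right architecture, but the step that decides the theorem for \emph{every} $t>1$ (injectivity on $\overline{D_t^3}$) is only announced, and both mechanisms you propose for it fail.

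\emph{The linear automorphism does nothing.} In the coordinates $w_1=z_1+iz_2$, $w_2=z_1-iz_2$, $w_3=z_3+iz_4$, $w_4=z_3-iz_4$ we have $Q^3=\{w_1w_2+w_3w_4=1\}$. In these coordinates your hyperbolic rotation is simply $\psi_s(W)=(e^{s}w_1,e^{-s}w_2,w_3,w_4)$. For Isaev's ansatz $F_0(W)=(w_1,w_3,w_2w_4h(w_3w_4))$ this gives $F_0\circ\psi_s=\mathrm{diag}(e^{s},1,e^{-s})\circ F_0$. The same happens for every $\mathrm{diag}(a,a^{-1},b,b^{-1})$, so the precomposed map has exactly the fibers of $F_0$. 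For instance, with $h(s)=(e^{2\pi i s}-1)/(s(1-s))$, $F_0$ identifies $(1,\tfrac12,1,\tfrac12)$ and $(1,-\tfrac12,1,\tfrac32)$. Both points lie in $D_t^3$ once $t>9/4$, so no choice of $s$ helps.

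\emph{The numerical-range estimate cannot hold on the region you need.} Use a left-invariant frame on $Q^3\cong SL_2(\mathbb C)$. A linear automorphism sends $S^3\cong SU(2)$ to some $a\,SU(2)\,b$. This set contains closed curves $\gamma(\tau)=ab\exp(\tau\,b^{-1}Xb)$ with $X\in\mathfrak{su}(2)$, whose velocity has constant coefficients $c$ in the frame. Then $0=F(\gamma(T))-F(\gamma(0))=\int_0^T A(\gamma(\tau))c\,d\tau$, and pairing with $c$ contradicts $\operatorname{Re}\langle A c,c\rangle>0$.

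Two smaller points: Step 1 is left without an explicit $F$, and Isaev's extension theorem plays no role in this direction.

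\textbf{The missing idea} is the one the paper uses.
\begin{enumerate}
\item Choose $h(s)=(e^{2\pi i s}-1)/(s(1-s))$. Then on $Q^3$ the product of the three components of $F_0$ is $e^{2\pi i s}-1$, where $s=w_3w_4$.
\item This makes $F_0|_{Q^3}$ a local biholomorphism, since the relevant determinant is $-2\pi i\,e^{2\pi i s}$. It also makes the fibers explicit: for $w_1w_3\neq0$ they are given by $s\mapsto s+m$, i.e. $(w_1,w_2-m/w_1,w_3,w_4+m/w_3)$ with $m\in\mathbb Z$; otherwise they are singletons.
\item Precompose with the \emph{nonlinear} but still entire automorphism $\delta_\lambda(W)=(e^{\lambda s}w_1,e^{-\lambda s}w_2,e^{\lambda s}w_3,e^{-\lambda s}w_4)$. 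Its dilation factor depends on the invariant $s$, which jumps by $m$ along a fiber. Hence the fibers of $F_\lambda=F_0\circ\delta_\lambda$ become $W_m=(e^{-\lambda m}w_1,e^{\lambda m}(w_2-m/w_1),e^{-\lambda m}w_3,e^{\lambda m}(w_4+m/w_3))$.
\item The cross-pairing identity $w_1(W_m)_2+w_3(W_m)_4=e^{\lambda m}$ together with Cauchy--Schwarz gives $e^{\lambda m}\le\|W\|\,\|W_m\|$. So $F_\lambda$ is injective on $\{W\in Q^3:\|W\|^2<e^{\lambda}\}$, which contains $\overline{D_t^3}=\{\|W\|^2\le 2t\}$ as soon as $e^\lambda>2t$.
\end{enumerate}
This $s$-dependent, fiber-shifting dilation is exactly what no linear automorphism can supply. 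It also replaces any differential (numerical-range) estimate with an exact description of the fibers.
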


Together with the known non-embeddability of $M_t^7$ into
$\mathbb C^7$, Theorem~\ref{thm:main} completes the Morimoto--Nagano classification of compact homogenous strongly pseudoconvex hypersurfaces in $\mathbb C^n$. 
\begin{theorem}
Let $S\subset\mathbb C^n$ be a compact, simply connected, real-analytic hypersurface. If a connected Lie group acts transitively on $S$ by CR automorphisms, then exactly one of the following holds:
\begin{enumerate}
\item $S$ is CR-equivalent to the unit sphere
$S^{2n-1}\subset\mathbb C^n$;
\item $n=3$, and $S$ is CR-equivalent to $M_t^3$ for a unique $t>1$.
\end{enumerate}
\end{theorem}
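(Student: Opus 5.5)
This final theorem will be assembled from the Morimoto--Nagano classification, Isaev's non-embeddability result for $M_t^7$, the rigidity results of Kaup--Zaitsev and Burns--Hind, and Theorem~\ref{thm:main}. First I recall what the full Morimoto--Nagano analysis gives, including the exceptional dimensions. If $S\subset\mathbb C^n$ is compact, simply connected, real-analytic, and homogeneous under a connected Lie group of CR automorphisms, then $S$ is CR-equivalent either to $S^{2n-1}$, or (only when $n=3$ or $n=7$) to some $M_t^n$ with $t>1$. So the task splits into three parts: ruling out the $n=7$ exceptional models, showing that the $n=3$ exceptional models actually occur, and checking exclusivity and uniqueness.

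\textbf{Step 1 ($n=7$).} Suppose $S\subset\mathbb C^7$ were CR-equivalent to $M_t^7$. Then we get a real-analytic CR embedding $M_t^7\hookrightarrow\mathbb C^7$. By Isaev's extension theorem it extends to a biholomorphism of $D_t^7$ onto a domain in $\mathbb C^7$. Restricting to the totally real zero section $S^7=Q^7\cap\mathbb R^8$ gives a totally real embedding $S^7\hookrightarrow\mathbb C^7$. This contradicts Gromov and Stout--Zame. Hence for $n=7$ only the sphere survives, and for $n\ne 3$ alternative (1) always holds.

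\textbf{Step 2 ($n=3$, realizability).} For each $t>1$, Theorem~\ref{thm:main} gives $\Phi_t(M_t^3)\subset\mathbb C^3$. This is a compact real-analytic hypersurface, since $\Phi_t$ is a holomorphic embedding on the neighbourhood $V_t$ of $\overline{D_t^3}$. It is simply connected: $M_t^3$ is the unit tangent bundle of $S^3$, which is diffeomorphic to $S^3\times S^2$ because $S^3$ is parallelizable. It is homogeneous under $SO(4,\mathbb R)$, transported by $\Phi_t$. So every $M_t^3$ genuinely appears in the list, and the classification reduces exactly to alternatives (1) and (2).

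\textbf{Step 3 (exclusivity and uniqueness).} The two alternatives are mutually exclusive because $M_t^3$ is nonspherical~\cite{nemishafi}. Uniqueness of $t$ follows because the $M_t^3$ are pairwise CR-nonequivalent~\cite{KaupZaitsev,BurnsHind}.

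The only substantive input is Theorem~\ref{thm:main}; everything else is bookkeeping of cited results. The main point needing care is the precise form of the Morimoto--Nagano conclusion in dimensions $3$ and $7$. I would state explicitly that their theorem yields a CR-equivalence to some $M_t^n$ in those dimensions, rather than merely leaving the question open, so that Steps 1 and 2 exhaust all cases.
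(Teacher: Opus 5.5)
Your proposal is correct and follows the paper's own argument. The paper likewise obtains the theorem from five inputs: the Morimoto--Nagano dichotomy; Isaev's extension theorem together with the Gromov and Stout--Zame obstruction, which rules out $M_t^7$; Theorem~\ref{thm:main}, which realizes every $M_t^3$; the nonsphericality of the $M_t^n$; and their pairwise CR-inequivalence. As the statement is literally phrased, only your Steps~1 and~3 are logically needed, while Step~2 (realizability via Theorem~\ref{thm:main}) shows that every $M_t^3$ actually occurs, which is the sense in which the paper says the classification is completed.
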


We conclude this introduction by briefly outlining the main idea behind our construction. Combining the differential-equation method of Bell and Narasimhan with Isaev's coordinate ansatz, we obtain a local biholomorphism $F_0|_{Q^3}:Q^3\rightarrow\mathbb C^3$ by making the simplest linear choice for the exponent in the resulting differential equation. Remarkably, with this simple choice, the nontrivial fibers of $F_0|_{Q^3}$ admit an explicit description in terms of integer translations. We then observe a natural one-parameter family of fiberwise dilations $\delta_{\lambda}|_{Q^3}\in{\rm Aut}(Q^3)$, which appropriately force distinct points in each fiber of $F_{\lambda}|_{Q^3}:=(F_0|_{Q^3})\circ(\delta_{\lambda}|_{Q^3})$ to separate exponentially in Euclidean norm. A Cauchy--Schwarz estimate then shows that, for every fixed $t>1$, the map $F_{\lambda}|_{Q^3}$ is injective on a neighborhood of $\overline{D_t^3}$ for sufficiently large $\lambda$.
\bigskip

{\bf \noindent Acknowledgment}:
The author is deeply grateful to Professor Xiaojun Huang, for his lasting support and encouragement, and for generously sharing his mathematical insights over the years. In particular, he  thanks Professor Xiaojun  Huang for  bringing this problem to his attention even at his time as a graduate student at Rutgers University, for suggesting him to  study  Bell--Narasimhan's construction of holomorphic immersions by solving a differential equation, fully exploiting the algebraic structure, and constructing a sequence of immersions making use of the group structure of its automorphism with increasing injectivity radius. This turns out to be the crucial idea for our construction.  We also thank him for carefully reading the manuscript and offering valuable suggestions.

\section{The factorization of the explicit map}

For convenience, we make the following holomorphic change of
coordinates of $\mathbb C^4$:
\begin{equation*}
 w_1=z_1+iz_2,\quad w_2=z_1-iz_2,\quad
 w_3=z_3+iz_4,\quad w_4=z_3-iz_4.
\end{equation*}
In these coordinates, the quadric $Q^3$ becomes
\[
 \left\{W=(w_1,w_2,w_3,w_4)\in\mathbb C^4:     w_1w_2+w_3w_4=1\right\}.
\]
This is crucial  as it helps us  to find easily  a family of natural dilations that  blows up the distance of two points.
Consequently,
\begin{equation*}
D_t^3=\{W\in Q^3:\|W\|^2<2t\},\,M_t^3=\{W\in Q^3:\|W\|^2=2t\},\,{\rm where\,\,}\|W\|^2=\sum\nolimits_{j=1}^4|w_j|^2.
\end{equation*}

We begin with a brief review of the Bell--Narasimhan construction (see \cite[Page 19]{Bell}). Given an entire function $\psi$, choose an entire function $\varphi$ such that
\begin{equation}\label{dd}
d\cdot x^{d-1}-\varphi'(x)(1-x^d)=e^{\psi(x)},\quad x\in\mathbb C.
\end{equation}
Then, for any hypersurface
\begin{equation*}
X=\left\{
 (x,y^{(1)},\ldots,y^{(k)}):
 x^d+\sum\nolimits_{j=1}^k P_j(y^{(j)})=1
 \right\},\quad y^{(j)}\in\mathbb C^{n_j},\quad n_1+\cdots+n_k=:n,
\end{equation*}
where each \(P_j\) is homogeneous of degree \(d_j\), the holomorphic map $F:\mathbb C^{n+1}\to\mathbb C^n$ defined by
\begin{equation*}
 (x,y^{(1)},\ldots,y^{(k)})
 \longmapsto
(e^{\varphi(x)/d_1}y^{(1)},\ldots,
 e^{\varphi(x)/d_k}y^{(k)})
\end{equation*}
restrictions to a holomorphic immersion \(F|_X:X\to\mathbb C^n\).

As in \cite[(2.6), (2.15)]{Isaev2021}, we consider the holomorphic map $F_0:\mathbb C^4\to\mathbb C^3$ defined by
\begin{equation}\label{eq:F0}
F_0(w_1, w_2, w_3, w_4) =(w_1, w_3, w_2w_4\cdot h(w_3w_4)),
\end{equation}
where $h$ is an entire function. Setting \(s=w_3w_4\), we know from \cite[Lemma 2.1]{Isaev2021} that \(F_0|_{Q^3}\) is an immersion if and only if
\begin{equation*}
0\neq w_3\frac{\partial (w_2w_4 h)}{\partial w_2}-w_1\frac{\partial (w_2w_4 h)}{\partial w_4}
=-\frac{\partial \big((1-s)s\cdot h(s)\big)}{\partial s}
\quad\text{for all } W\in Q^3.
\end{equation*}
Motivated by \eqref{dd}, we impose
\begin{equation*}
-\frac{{\rm d} \big((1-s)s\cdot h(s)\big)}{{\rm d} s}=e^{\psi(s)}.
\end{equation*}
In the simplest case \(\psi(s)=\alpha s+\beta\),  solving for \(h\) yields
\begin{equation*}
h(s)=-\frac{e^\beta}{\alpha} \frac{e^{\alpha s}-1}{s(1-s)} \quad (\alpha\neq0)
\qquad\text{or}\qquad
h(s)=-\frac{e^\beta}{1-s} \quad (\alpha=0).
\end{equation*}
Since \(h\) is entire, we have \(\alpha=2\pi m i\) for some \(m\in\mathbb Z\setminus\{0\}\). In what follows, we fix $F_0$ with $h$ given by
\begin{equation}\label{eq:h}
 h(s)=\frac{e^{2\pi i s}-1}{s(1-s)}.
\end{equation}

Our second main ingredient is a natural one-parameter family of  automorphisms of $\mathbb C^4$ preserving $Q^3$, given by
\begin{equation}\label{eq:delta}
\delta_{\lambda}(w_1,w_2,w_3,w_4)=
\left(e^{\lambda s}w_1,e^{-\lambda s}w_2, e^{\lambda s}w_3,e^{-\lambda s}w_4\right), \,\,\lambda\in\mathbb R. 
\end{equation}
Note that $(\delta_{\lambda})^{-1}=\delta_{-\lambda}$. Now, we define $F_{\lambda}:=F_0\circ\delta_{\lambda}$, that is,
\begin{equation}
 F_{\lambda}(w_1,w_2,w_3,w_4)=
 \left(e^{\lambda s}w_1,e^{\lambda s}w_3,       w_2w_4\,e^{-2\lambda s}h(s)\right),
 \,\, s=w_3w_4.
\end{equation}

Theorem~\ref{thm:main} follows from the next proposition.
\begin{proposition}\label{prop:tube}
For every $\lambda>0$, the restriction of $F_{\lambda}$ to
\[
 U_{\lambda}=\{W\in Q^3:\|W\|^2<e^{\lambda}\}
\]
is a holomorphic embedding.
\end{proposition}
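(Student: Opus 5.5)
The plan is to reduce injectivity of $F_\lambda|_{U_\lambda}$ to an explicit description of the fibers of $F_0|_{Q^3}$, and then to separate the points of each fiber by a Cauchy--Schwarz estimate. The immersion property is immediate. By \cite[Lemma 2.1]{Isaev2021}, $F_0|_{Q^3}$ is an immersion exactly when $-\frac{\mathrm d}{\mathrm ds}\big((1-s)s\,h(s)\big)$ never vanishes. With $h$ as in \eqref{eq:h} this derivative equals $-\frac{\mathrm d}{\mathrm ds}(e^{2\pi i s}-1)=-2\pi i e^{2\pi i s}\neq 0$. Moreover, $\delta_\lambda$ is a biholomorphism of $\mathbb C^4$ preserving $Q^3$, with inverse $\delta_{-\lambda}$, so $F_\lambda|_{Q^3}=F_0|_{Q^3}\circ\delta_\lambda|_{Q^3}$ is also an immersion. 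Since $\dim Q^3=3$, an injective holomorphic immersion of the open set $U_\lambda$ into $\mathbb C^3$ is a biholomorphism onto an open image. Hence only injectivity remains to be shown.

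\emph{Step 1: fibers of $F_0|_{Q^3}$.} Let $W\in Q^3$ with $F_0(W)=(a,b,c)$. Then $w_1=a$ and $w_3=b$. We treat three cases.
\begin{itemize}
\item If $a=0$, then $s=w_3w_4=1$, so $w_4=1/b$ is fixed. Since $h(1)=-2\pi i\neq0$, the relation $c=w_2w_4h(1)$ determines $w_2$, and the fiber is a single point.
\item If $b=0$, then $s=0$, so $w_2=1/a$. Since $h(0)=2\pi i\neq 0$, $w_4$ is determined, and again the fiber is a point.
\item If $ab\neq0$, then $w_2=(1-s)/a$ and $w_4=s/b$. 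The third coordinate becomes $(1-s)s\,h(s)/(ab)=(e^{2\pi i s}-1)/(ab)$, which determines $s$ exactly modulo $\mathbb Z$.
\end{itemize}
Thus two distinct points $W,W'$ of $Q^3$ with the same image under $F_0$ have $w_1'=w_1\neq0$ and $w_3'=w_3\neq0$, and $s'=s+k$ for some $k\in\mathbb Z\setminus\{0\}$.

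\emph{Step 2: pulling back by $\delta_\lambda$.} The dilation $\delta_\lambda$ preserves $s=w_3w_4$. So if $F_\lambda(W)=F_\lambda(W')$ with $W\neq W'$ in $Q^3$, Step 1 applied to $\delta_\lambda W,\delta_\lambda W'$ gives $s'=s+k$ with $k\neq 0$, and $e^{\lambda s}w_1=e^{\lambda s'}w_1'$, $e^{\lambda s}w_3=e^{\lambda s'}w_3'$. Interchanging $W$ and $W'$ if necessary, we may assume $k\ge1$. Then $w_1'=\mu w_1$ and $w_3'=\mu w_3$ with $|\mu|=e^{-\lambda k}\le e^{-\lambda}$ (here $\lambda$ is real, and $|e^{-\lambda k}|$ is all that matters). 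On $Q^3$ we therefore get $w_2'=(1-s-k)/(\mu w_1)$ and $w_4'=(s+k)/(\mu w_3)$.

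\emph{Step 3: the key estimate.} Put $A=1-s-k$ and $B=s+k$, so that $A+B=1$. Cauchy--Schwarz gives
\[
\|W\|^2\,\|W'\|^2\ \ge\ \big(|w_1|^2+|w_3|^2\big)\cdot\frac{1}{|\mu|^2}\Big(\frac{|A|^2}{|w_1|^2}+\frac{|B|^2}{|w_3|^2}\Big)\ \ge\ \frac{(|A|+|B|)^2}{|\mu|^2}\ \ge\ \frac{|A+B|^2}{|\mu|^2}=e^{2\lambda k}\ \ge\ e^{2\lambda}.
\]
Consequently $W$ and $W'$ cannot both satisfy $\|W\|^2<e^\lambda$, so $F_\lambda$ is injective on $U_\lambda$.

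I expect Step 1 to be the main point, in particular the clean fiber description in terms of integer shifts of $s$ together with the degenerate cases $w_1=0$ or $w_3=0$. In those degenerate cases one must check that $h(0)$ and $h(1)$ are nonzero, which they are. Once the fiber structure is in place, Steps 2 and 3 are a direct computation: the identity $A+B=1$ feeds straight into Cauchy--Schwarz, and the exponential factor $e^{\lambda k}$ supplied by $\delta_\lambda$ yields the separation.
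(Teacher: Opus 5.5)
Your proposal is correct and follows essentially the same route as the paper. The fibers of $F_0|_{Q^3}$ are integer shifts of $s$, with singleton fibers when $w_1w_3=0$ because $h(0),h(1)\neq0$, and these are pulled back through the $s$-preserving dilation $\delta_\lambda$. Your Cauchy--Schwarz chain is an unpacked form of the paper's cross-pairing identity $w_1w_2'+w_3w_4'=e^{\lambda k}$ combined with the bound $|w_1w_2'+w_3w_4'|\le\|W\|\,\|W'\|$.
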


We will verify later that $F_0|_{Q^3}$ is locally one to one. As $F_{\lambda}|_{Q^3}$ is a holomorphic map from a three dimensional complex manifold to ${\mathbb C}^3$, it is locally biholomorphic. Since $\delta_{\lambda}|_{Q^3}$ is an
automorphism,
$F_{\lambda}|_{Q^3}$ is likewise
a local biholomorphism
for every $\lambda$. We proceed to prove the following core lemma, which gives an explicit description of the fibers of $F_{\lambda}|_{Q^3}$.
\begin{lemma}\label{prop:fibers}
If \(W\in Q^3\) satisfies \(w_1w_3=0\), then 
\begin{equation*}
(F_{\lambda}|_{Q^3})^{-1}(F_{\lambda}(W))=\{W\}.    
\end{equation*}
If $w_1w_3\neq0$, then \begin{equation*}
(F_{\lambda}|_{Q^3})^{-1}(F_{\lambda}(W))=\{W_m:m\in\mathbb Z\},    
\end{equation*} where
\begin{equation}\label{eq:Wm}
\begin{split}
W_m&=\left(e^{-\lambda m}w_1,
 e^{\lambda m}(w_2-\frac{m}{w_1}),
 e^{-\lambda m}w_3,
 e^{\lambda m}(w_4+\frac{m}{w_3})\right),\,\,m\in\mathbb Z.\\
\end{split}
\end{equation}
\end{lemma}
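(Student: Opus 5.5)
Since $F_\lambda=F_0\circ\delta_\lambda$ and $\delta_\lambda|_{Q^3}$ is an automorphism with inverse $\delta_{-\lambda}$, the plan is to determine the fibers of $F_0|_{Q^3}$ first and then transport them by $\delta_{-\lambda}$. Because the $s$-coordinate $w_3w_4$ is preserved by $\delta_\lambda$, this transport is explicit.

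\emph{Fibers of $F_0$.} Let $V,V'\in Q^3$ with $F_0(V)=F_0(V')$. Then $v_1'=v_1$, $v_3'=v_3$ and $v_2'v_4'h(s')=v_2v_4h(s)$, where $s=v_3v_4$ and $s'=v_3v_4'$. On $Q^3$ we have $v_1v_2=1-s$, so when $v_1v_3\ne0$ we get $v_2v_4=(1-s)s/(v_1v_3)$. Hence
\[
v_2v_4h(s)=\frac{e^{2\pi i s}-1}{v_1v_3}.
\]
The third component therefore forces $e^{2\pi i s'}=e^{2\pi i s}$, that is, $s'=s+m$ with $m\in\mathbb Z$. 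It follows that $v_4'=v_4+m/v_3$ and $v_2'=(1-s')/v_1=v_2-m/v_1$. Conversely, every such point lies in $Q^3$ and has the same image.

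The degenerate cases are handled separately, using that $h$ is entire and $(1-s)s\,h(s)=e^{2\pi is}-1$.
\begin{itemize}
\item If $v_1=0$ and $v_3\neq0$, then $s=1$ and $s'=1$. Thus $v_4'=v_4$, and $v_2'$ is determined by comparing third components $v_2v_4h(1)$, using $h(1)=-2\pi i\neq0$ and $v_4=1/v_3\ne0$.
\item If $v_3=0$ and $v_1\ne0$, then $s=s'=0$, and the third component $v_2v_4'h(0)$ with $h(0)=2\pi i$ together with $v_2=1/v_1$ determines $v_4'$; also $v_2'=1/v_1$.
\item The case $v_1=v_3=0$ is impossible on $Q^3$.
\end{itemize}
So the fiber is a singleton when $v_1v_3=0$.

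\emph{Transport by $\delta_\lambda$.} Apply the $F_0$ analysis with $V=\delta_\lambda(W)$. Since $s(V)=s(W)$, the condition $w_1w_3=0$ is equivalent to $v_1v_3=0$. When $w_1w_3\ne0$, set $V'_m=(v_1,v_2-m/v_1,v_3,v_4+m/v_3)$, where $v_1=e^{\lambda s}w_1$ and so on. Then $s(V'_m)=s+m$, and we compute $\delta_{-\lambda}(V'_m)$ using the exponent $-\lambda(s+m)$. This gives first coordinate $e^{-\lambda(s+m)}e^{\lambda s}w_1=e^{-\lambda m}w_1$. The second coordinate is $e^{\lambda(s+m)}\bigl(e^{-\lambda s}w_2-m e^{-\lambda s}/w_1\bigr)=e^{\lambda m}(w_2-m/w_1)$, and the third and fourth coordinates work out similarly. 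This recovers exactly $W_m$ in \eqref{eq:Wm}.

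The only delicate point is bookkeeping the $s$-dependence of the dilation: the inverse must be applied at the \emph{new} value $s+m$, not at $s$.
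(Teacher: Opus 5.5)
Your proposal is correct and follows the paper's proof essentially step by step: first the fibers of $F_0|_{Q^3}$ (singletons on the axes because $h(0),h(1)\neq0$, and integer shifts $s'=s+m$ otherwise from $e^{2\pi i s'}=e^{2\pi i s}$), then transport by $\delta_{-\lambda}$ evaluated at the shifted value $s+m$. You also verify the reverse inclusion (each shifted point lies on $Q^3$ and has the same image), which the paper leaves implicit; the equivalence $w_1w_3=0\iff v_1v_3=0$ actually comes from $e^{\lambda s}\neq0$ rather than from $s(V)=s(W)$, but this is cosmetic.
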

\begin{proof}[Proof of Lemma~\ref{prop:fibers}]
We first consider the case $\lambda=0$. Let $W'\in (F_0|_{Q^3})^{-1}(F_0(W))$ be arbitrary. Then $w_1'=w_1$ and $w_3'=w_3$. If $w_1=0$, then $w_3w_4=w_3'w_4'=1$, and hence $w_4'=1/w_3'=1/w_3=w_4$. Since the third component of $F_0$ equals $w_2 w_4 h(s)$ and $h(1)\neq0$, this determines $w_2'$. The case $w_3=0$ is identical, using $h(0)\neq0$.

If $w_1w_3\neq0$, then comparing the products of the three components of $F_0$ at $W$ and $W'$, namely 
\begin{equation*}
e^{2\pi iw_3w_4}-1=e^{2\pi iw_3'w_4'}-1,
\end{equation*}
we obtain $w_3'w_4'-w_3w_4=m\in\mathbb Z$. 

Now assume that $\lambda\in\mathbb R$ is arbitrary. Recall that $(F_{\lambda}|_{Q^3})^{-1}(F_{\lambda}(W))=\delta_{\lambda}^{-1}\left( (F_0|_{Q^3})^{-1}\bigl(F_0(\delta_{\lambda}(W))\bigr)\right)$. Applying the fiber formula to $\delta_{\lambda}(W)$ gives
$(F_0|_{Q^3})^{-1}\bigl(F_0(\delta_{\lambda}(W))\bigr) = \{\widehat W_m:m\in\mathbb Z\}$,
where
\[
 \widehat W_m:
 =
 \left(
 e^{\lambda s}w_1,\,
 e^{-\lambda s}(w_2-\frac{m}{w_1}),\,
 e^{\lambda s}w_3,\,
 e^{-\lambda s}(w_4+\frac{m}{w_3})
 \right).
\]
Since the axis fibers are already known to be singletons, we may assume \(w_1w_3\neq0\). For such points, the product of the $w_3$ and $w_4$ coordinates gives $e^{\lambda s}w_3\cdot e^{-\lambda s}(w_4+\frac{m}{w_3})=s+m$. Thus, when applying \(\delta_{\lambda}^{-1}=\delta_{-\lambda}\) to \(\widehat W_m\), the exponential factor is evaluated at \(s+m\) rather than at \(s\). Consequently, \(\delta_{\lambda}^{-1}(\widehat W_m)\) is given by \eqref{eq:Wm}, which completes the proof.
\end{proof}

\begin{proof}[Proof of Proposition~\ref{prop:tube}]
It remains to prove injectivity on $U_{\lambda}$. We argue by contradiction. Suppose that there exist distinct points $W,W'\in U_{\lambda}$ with $F_{\lambda}(W)=F_{\lambda}(W')$. By Lemma~\ref{prop:fibers}, we have $w_1w_3\neq0$. Notice that for any $m\in\mathbb Z$, we have $(W_m)_{-m}=W$, where
the points $W_m$, $(W_m)_{-m}$ are the preimages defined in \eqref{eq:Wm}.  Hence, we can assume that $W'=W_m$ for some $m>0$ (otherwise we can interchange the roles of $W$ and $W'$). Now formula \eqref{eq:Wm} yields the cross-pairing identity
\begin{equation*}
w_1(W_m)_2+w_3(W_m)_4=e^{\lambda m},
\end{equation*}
where $(W_m)_j$ denotes the 
$j$-th coordinate of $W_m$. By Cauchy--Schwarz,
\begin{equation*}
 e^{\lambda m}\leq \sqrt{|w_1|^2+|w_3|^2}          \sqrt{|(W_m)_2|^2+|(W_m)_4|^2}\leq \|W\|\,\|W_m\|<e^{\lambda},    
\end{equation*}
contradicting $m\geq1$.
Therefore $F_{\lambda}$ is injective on $U_{\lambda}$.
\end{proof}

\bibliographystyle{plain} 
\bibliography{refs}

\end{document}